\newcommand{\tgerman}[1]{\emph{#1}}
\newcommand{\bbC}{\mathbb{C}}
\newcommand{\bbQ}{\mathbb{Q}}
\newcommand{\bbR}{\mathbb{R}}
\theoremstyle{theorem}
\newtheorem{theorem}{Theorem}
\newtheorem{lemma}[theorem]{Lemma}
\theoremstyle{definition}
\author{David Monniaux}
\title{Fatal Degeneracy in the Semidefinite Programming Approach to the Decision of Polynomial Inequalities}
\begin{document}
\maketitle

\begin{abstract}
In order to verify programs or hybrid systems, one often needs to prove that certain formulas are unsatisfiable. In this paper, we consider conjunctions of polynomial inequalities over the reals. Classical algorithms for deciding these not only have high complexity, but also provide no simple proof of unsatisfiability. Recently, a reduction of this problem to semidefinite programming and numerical resolution has been proposed. In this article, we show how this reduction generally produces degenerate problems on which numerical methods stumble.
\end{abstract}


\section{Introduction}
In order to verify properties of computer programs, hybrid systems \citep{DBLP:conf/hybrid/2005}, or even biological systems \citep{DBLP:conf/hybrid/GhoshTT03}, one often needs to prove that a formula is unsatisfiable. For instance, we may wish to prove that a set of states $I$ is a program invariant: this means that there is no pair of states $(s,s')$ such that $s \rightarrow_\tau s'$, $s \in I$ and $s' \notin I$, where $\tau$ is the transition relation. A proof of unsatisfiability leads to a proof of program safety. A satisfiability witness (a pair of states $s$, $s'$) may be provided to the user as a counterexample to the $I$ ``invariant candidate''; it may also be used for automated counterexample-based refinement \citep{DBLP:conf/popl/BallR02,DBLP:conf/cav/2000}.

In general, formulas arising from program verification contain both conjunctions and disjunctions. One can reduce satisfiability of such formulas to the satisfiability of conjunctions by applying distributivity, but this usually leads to blowup. A better approach is \emph{satisfiability modulo theory}: the problem is reduced to boolean satisfiability testing (SAT), a NP-complete problem for which there exist practically efficient procedures, with the addition of \emph{theory lemmas} stating that certain conjunctions are not satisfiable \citep{Ganzingeretal2004CAV}. This approach relies on the availability of a (fast) decision procedure for conjunctions, which, ideally, given a contradictory conjunction, outputs a contradictory subset.

In this article, we consider the following problem: given a conjunction of polynomial equalities, and (wide and strict) polynomial inequalities, with integer or rational coefficients, decide whether this conjunction is satisfiable\,; that is, whether one can assign real values to the variables so that the conjunction holds.
 
The decision problem for real polynomial inequalities can be reduced to \emph{quantifier elimination}: given a formula $F$, whose atomic formulas are polynomial (in)equalities, containing quantifiers, provide another, equivalent, formula $F'$, whose atomic formulas are still polynomial (in)equalities, containing no quantifier. Quantifier elimination over a formula where all variables are existentially quantified yields an equivalent ground formula (a formula with no variable), whose truth can be decided by mere syntactic evaluation. An algorithm for quantifier elimination over the theory of \emph{real closed fields} (roughly speaking, $(\bbR,0,1+,×,\leq)$ was first proposed by \citet{Tarski51}, but this algorithm had non-elementary complexity and thus was impractical. Later, the \emph{cylindrical algebraic decomposition} (CAD) algorithm was proposed by \citet{Collins75}, with a doubly exponential complexity, but despite improvements \citep{Collins98} CAD is still slow in practice and there are few implementations available.

While quantifier elimination provides a procedure for deciding the satisfiability of quantifier-free constraint systems, it is not the only method for doing so.
\citet[Theorem~3]{BasuPollackRoy96} proposed a satisfiability testing algorithm with complexity $s^{k+1} d^{O(k)}$, where $s$ is the number of distinct polynomials appearing in the formula, $d$ is their maximal degree, and $k$ is the number of variables. We know of no implementation of that algorithm.
\citet{Tiwari05:CSL} proposed an algorithm based on rewriting systems that is supposed to answer in reasonable time when a conjunction of polynomial inequalities has no solution.


Many of the algebraic algorithms are complex, which leads to complex implementatyions. This poses a methodology problem for program verification: can one trust them? Can one rely on a complex verification system in order to prove that a complex program contains no bugs? We could either prove correct the implementation of the algorithm using a proof assistant, 
or we could arrange for the decision procedure to provide a \emph{witness} of its result. The answer of the procedure is correct if the witness is correct, and correctness of the witness can be checked by a simple procedure. We know how to provide unsatisfiability witnesses for systems of complex equalities or linear rational inequalities. It is therefore tempting to seek unsatisfiability witnesses for systems of polynomial inequalities.

\citet{harrison-sos,Parrilo_PhD} have suggested looking for proof witnesses whose existence is guaranteed by the \tgerman{Positivstellensatz}~\citep{Stengle73}. These witnesses involve sums of squares of polynomials, which are obtained as solutions of a \emph{semidefinite programming} (SDP) problem, solved by numerical methods.

In this article, we show how the reduction of the problem of finding \tgerman{Positivstellensatz} witnesses to semidefinite programming leads, in general, to degenerate cases that cannot be solved numerically. It is possible to recast the problem in lower dimension so as to remove degeneracy, but doing so involves computing the Zariski closure of the solution set, by algebraic methods. Since this is as complex as finding a solution point to the SDP problem by algebraic method, we gain nothing by using numerical solver.

We therefore conclude that, though promising it may have seemed, finding \tgerman{Positivstellensatz} witnesses through semidefinite programming numerical techniques is impractical.

\section{Unsatisfiability Witnesses}
For some interesting theories, it is trivial to check that a given valuation of the variables satisfies the formula (e.g., for linear inequalities over the rationals, it suffices to replace the variables by their value and evaluate the form). A satisfiability decision procedure will in this case tend to seek a \emph{satisfiability witness} and provide it to the user when giving a positive answer

In contrast, if the answer is that the problem is not satisfiable, the user has to trust the output of the satisfiability testing algorithm, the informal meaning of which is ``I looked carefully everywhere and did not find a solution.''. In some cases, it is possible to provide \emph{unsatisfiability witnesses}: solutions to some form of dual problem that show that the original problem had no solution. In order to introduce the \tgerman{Positivstellensatz} approach, we first briefly explain two simpler, but similar, problems with unsatisfiability witnesses.



\subsection{Linear Inequalities}\label{part:linear_ineq_witnesses}
Let $C$ be a conjunction of (strict or wide) linear inequalities. A satisfiability witness is just a valuation such that the inequalities hold, and can be obtained by linear programming for instance.

Can we also have unsatisfiability witnesses? For the sake of simplicity, let us consider the case where all the inequalities are wide and take $C$ to be $L_1(x_1, \dots, x_m) \geq 0 \wedge \dots \wedge L_n(x_1, \dots, x_m) \geq 0$ where the $L_i$ are affine linear forms. Obviously, if $\alpha_1, \dots, \alpha_n$ are nonnegative coefficients, then if $C$ holds, then $\sum \alpha_i L_i (x_1, \dots, x_m) \geq 0$ also holds. Thus, if one can exhibit $\alpha_1, \dots, \alpha_n \geq 0$ such that $\sum \alpha_i L_i = -1$  --- otherwise said, a nonnegative linear combination of the inequalities is a \emph{trivial contradiction} ---, then $C$ does not hold. The vector $(\alpha_1, \dots, \alpha_n)$ is thus an unsatisfiability witness.

This refutation method is evidently \emph{sound}, that is, if such a vector can be exhibited, then the original problem had no solution. It is also \emph{complete}: one can always obtain such a vector if the original problem $C$ is unsatisfiable, from Farkas' lemma \citep[\S 6.4, theorem~6]{Dantzig}. A constructive proof of the same fact can be obtained by considering the result of the Fourier-Motzkin algorithm \citep[\S 4.4]{Dantzig} applied to all variables: it outputs a conjunction of variable-free formulas, equivalent to $C$ and obtained by nonnegative linear combinations of the $L_i$. $C$ is unsatisfiable if and only if at least one of these variable-free positive linear combinations is absurd, and this one provides a witness.

Interestingly, the witness is obtained as a solution of a \emph{dual problem} of the same nature as the original problem. That is, the unsatisfiability witness is itself the solution of a system of linear equalities and inequalities... which can be solved by linear programming.

\subsection{Complex Polynomial Equalities}
Let $C$ be a conjunction of polynomial equalities $P_1(x_1, \dots, x_m) =0 \wedge \dots \wedge P_n(x_1, \dots, x_m) =0$ whose coefficients lie in a subfield $K$ (say, the rational numbers $\bbQ$) of an algebraically closed field $K'$ (say, the complex numbers $\bbC$). $C$ is said to be satisfiable if one can find a valuation in $K'$ of the variables in $C$ such that the equalities hold. Such a valuation thereby constitutes a satisfiability witness.

Let us first remark that it is insufficient to look for the coefficients of the satisfiability witness inside $K$: for instance, $X^2=2$ has no rational solutions, but has real solutions $X=\pm \sqrt{2}$. Worse, it is a fact of Galois theory that the solutions of polynomials of degree higher than four cannot be in general expressed using arithmetic operators and $n$-th degree roots. Satisfiability witnesses may thus have to be expressed using general algebraic roots, and checking them is somewhat complex algorithmically.

In contrast, one can get unsatisfiability witnesses that are checkable using simple methods, only involving adding and multiplying polynomials over~$K$. Obviously, if one can find $Q_1,\dots,Q_n \in K[x_1,\dots,x_m]$ such that $\sum_i P_i Q_i=1$, then $C$ has no solution. Again, this method of finding a trivial contradiction is both sound and complete for refutation. The completeness proof relies on a theorem known as \tgerman{Nullstellensatz}:

\begin{theorem}[Hilbert]
Let $K'$ be an algebraically closed field, let $I$ be an ideal in $K'[x_1,\dots,x_n]$. Let $P$ be a polynomial in $K'[x_1,\dots,x_n]$. $P$ vanishes over the common zeroes of the ideals in $I$ if and only if some nonnegative power of $P$ lies in $I$.
\end{theorem}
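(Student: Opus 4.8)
The plan is to split the stated equivalence into its two implications and to spend essentially all the effort on one of them. The direction ``if some power of $P$ lies in $I$ then $P$ vanishes on the common zero set $V(I)$'' is immediate: if $P^k \in I$ then at any point $a$ of $V(I)$ we have $P(a)^k = 0$, and since $K'$ is a field this forces $P(a) = 0$. (The edge case $k = 0$, i.e.\ $1 \in I$, is consistent: then $V(I) = \emptyset$ and the claim is vacuous.) So the real content is the converse, and the strategy is the classical reduction to the \emph{weak Nullstellensatz} via the Rabinowitsch trick.

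First I would establish the weak form: \emph{every proper ideal of $K'[x_1,\dots,x_n]$ has a common zero}. The argument runs as follows. By Hilbert's basis theorem the ring $K'[x_1,\dots,x_n]$ is Noetherian, so a proper ideal is contained in some maximal ideal $\mathfrak{m}$, and it suffices to exhibit a point of $V(\mathfrak{m})$. The quotient $L = K'[x_1,\dots,x_n]/\mathfrak{m}$ is a field which is finitely generated as a $K'$-algebra; invoking Zariski's lemma (equivalently, Noether normalization) it is then a finite algebraic extension of $K'$, hence $L = K'$ because $K'$ is algebraically closed. The residues $a_i$ of the variables $x_i$ in $L = K'$ then give a point $a = (a_1,\dots,a_n) \in (K')^n$ annihilating every element of $\mathfrak{m}$, hence a point of $V(\mathfrak{m}) \subseteq V(I)$.

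Then I would run the Rabinowitsch trick to obtain the strong form. Given $P$ vanishing on $V(I)$, write $I = (f_1,\dots,f_r)$ (finite generation again by Hilbert's basis theorem), adjoin one fresh indeterminate $y$, and form the ideal $J = (f_1,\dots,f_r,\, 1 - yP)$ in $K'[x_1,\dots,x_n,y]$. One checks $V(J) = \emptyset$: a common zero of all the $f_i$ projects to a point of $V(I)$, where $P$ vanishes, so $1 - yP$ cannot vanish there too. By the weak form, $J$ is the whole ring, so $1 = \sum_i g_i f_i + g_0 (1 - yP)$ for suitable $g_i \in K'[x_1,\dots,x_n,y]$. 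Substituting $y \mapsto 1/P$ in the field of rational functions $K'(x_1,\dots,x_n)$ kills the last term, and multiplying through by a power $P^k$ large enough to clear every occurrence of $y$ yields $P^k = \sum_i \tilde g_i f_i$ with $\tilde g_i \in K'[x_1,\dots,x_n]$, i.e.\ $P^k \in I$.

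I expect the one genuinely nontrivial ingredient to be Zariski's lemma (equivalently Noether normalization), which I would quote as a standard fact of commutative algebra rather than reprove; granting it, everything else is bookkeeping around it and around Hilbert's basis theorem. The only other place needing a little care is checking that a single exponent $k$ suffices to clear denominators in the Rabinowitsch step, which follows at once from bounding the degrees of the $g_i$ in the variable $y$.
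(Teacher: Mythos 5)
The paper states this theorem as a classical result attributed to Hilbert and gives no proof of it, so there is nothing to compare against; it is used only as a black box to justify the existence of unsatisfiability witnesses $\sum_i Q_i P_i = 1$. Your argument is the standard textbook proof --- easy direction by evaluation, weak Nullstellensatz via Zariski's lemma, then the Rabinowitsch trick --- and it is correct. The only point worth tightening is the substitution $y \mapsto 1/P$ in the fraction field, which presupposes $P \neq 0$ as a polynomial; the case $P = 0$ should be dispatched separately (trivially, since then $P = P^1 \in I$ whenever $I$ is nonzero, and the statement is immediate anyway because $0$ lies in every ideal). Otherwise the degree-bounding remark that a single exponent $k$ clears all denominators is exactly right.
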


Apply that theorem to $P=1$ and $I$ the ideal generated by $P_1,\dots,P_m$. $P=1$ vanishes over the common zeroes of $I$ if and only if they have no common zeroes, and, by the theorem, if and only if $1$ lies in $I$, that is, there exists $\bar{Q}_1,\dots,\bar{Q}_m \in K'[x_1,\dots,x_n]$ such that $\sum_i \bar{Q}_i P_i = 1$. $K'$ is a vector space over $K$, thus $K$ has a supplemental space $S$ in $K'$. By projecting the coefficients of the $\bar{Q}_i$ onto $K$, one obtains polynomials $Q_i \in K[x_1,\dots,x_n]$ such that $\sum_i Q_i P_i = 1$. Those $Q_i$ constitute a unsatisfiability witness for~$C$.

For the sake of brevity, the remainder of the explanations will be somewhat sketchy; the reader can refer to e.g. \citet{CoxLittleOShea} if needed.
By Buchberger's algorithm, or some other algorithm, one can compute a Gr\"obner basis $P'_1,\dots,P'_m$ from the $P_1,\dots,P_m$. The ideals generated from both sets are identical, but the Gr\"obner basis has the property that a polynomial lies in the generated ideal if and only if the remainder of its division by the Gr\"obner basis, through the multivariate division algorithm, is null if and only if that polynomial belongs to the ideal. We therefore have a method for testing whether an unsatisfiability witness exists. Furthermore, if it exists, the division algorithm will provide $Q'_1, \dots, Q'_{m'}$ such that $\sum_j P'_j Q'_j =0$. If one has kept track of how the $P'_j$ can be expressed in terms of the $P_j$, then one can compute the $Q_1, \dots, Q_m$ witness.

Note that this algorithm is sound \emph{but incomplete} when $K'$ is not algebraically closed (e.g. the real field~$\bbR$). For instance, the polynomial $x^2+1$ has no real solution, yet the polynomial $1$ is not a member of the ideal generated by it. Thus, Gr\"obner basis computations can provide unsatisfiability witnesses for some systems of polynomial equalities over the reals, but not for all. The real case is much more complex than the complex case.

\section{Polynomial Inequalities}
For the sake of simplicity, we shall restrict ourselves to wide inequalities (the extension to mixed wide/strict inequalities is possible).
Let us first remark that the problem of testing whether a set of wide inequalities with coefficients in a subfield $K$ of the real numbers is satisfiable over the real numbers is equivalent to the problem of testing whether a set of \emph{equalities} with coefficients $K$ is satisfiable over the real numbers. The proof is simple: for each inequality $P_i(x_1,\dots,x_m) \geq 0$, replace it by $P_i(x_1,\dots,x_m) - \mu_i^2 =0$, where the $\mu_i$ are new variables.
One therefore does not gain theoretical simplicity by restricting oneself to inequalities.

\subsection{Real Nullstellensatz and Positivstellensatz}
\citet{Stengle73} proved two theorems regarding the solution sets of systems of polynomial equalities and inequalities over the reals (or, more generally, over real closed fields): a \tgerman{Nullstellensatz} and a \tgerman{Positivstellensatz}. Without going into overly complex notations, let us state consequences of these theorems. Let $K$ be an ordered field (such as~$\bbQ$) and $K'$ be a real closed field containing $K$ (such as the real field~$\bbR$). The corollary of interest to us is:

\begin{theorem}\label{th:real-nullstellensatz}
Let $Z_1,\dots,Z_{n_z}$, $P_1, \dots, P_{n_p}$ be two (possibly empty) sets of polynomials in $K[x_1, \dots, x_m]$. Then
$Z_1(x_1, \dots, x_m)=0 \wedge \dots \wedge Z_{n_z}(x_1, \dots, x_m)=0 \wedge
 P_1(x_1, \dots, x_m) \geq 0 \wedge \dots \wedge P_{n_p}(x_1, \dots, x_m) \geq 0$ has no solution if and only if there exist some polynomials $A$ and $B$ such that $A+B=1$, $A \in I(Z_1,\dots,Z_{n_z})$ and $B \in S(P_1, \dots, P_{n_p})$, where $I(Z_1,\dots,Z_{n_z})$ is the ideal generated by the $Z_1,\dots,Z_{n_z}$ and $S(P_1, \dots, P_{n_p})$ is the semiring generated by the positive elements of $K$ and $P_1^2, \dots, P_{n_p}^2$.
\end{theorem}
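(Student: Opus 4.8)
The plan is to obtain this corollary from the two theorems of \citet{Stengle73}, the \tgerman{Real Nullstellensatz} and the \tgerman{Positivstellensatz}, so that all the genuinely hard content is imported from there; the reduction of wide inequalities to equalities displayed at the start of this section will provide a convenient route for the non-trivial direction. As in the earlier examples (linear inequalities, complex equalities) one direction is a straightforward ``trivial contradiction'' check, and the other is where a real theorem is needed.

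For soundness (the ``if'' direction), suppose $A\in I(Z_1,\dots,Z_{n_z})$ and $B\in S(P_1,\dots,P_{n_p})$ satisfy the certificate identity, and assume for contradiction that some $(a_1,\dots,a_m)\in(K')^m$ satisfies the system. Writing $A=\sum_i H_iZ_i$ with $H_i\in K[x_1,\dots,x_m]$ gives $A(a_1,\dots,a_m)=0$; and since $B$ is built from positive elements of $K$ and the squares $P_j^2$ by repeated addition and multiplication, each term of that expression evaluates to a nonnegative element of the real closed field $K'$ at $(a_1,\dots,a_m)$, so $B(a_1,\dots,a_m)\ge 0$. Thus $A+B$ is nonnegative at this point, which is impossible since the certificate forces it to equal a negative constant (the statement's right-hand side being, up to the obvious sign convention, the analogue with an adjoined ideal part of the trivial contradiction $\sum\alpha_iL_i=-1$ of Section~\ref{part:linear_ineq_witnesses}). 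Hence the system has no solution.

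For completeness (the ``only if'' direction), assume the system is unsatisfiable over $K'$. Most directly, this is Stengle's \tgerman{Positivstellensatz}: emptiness of the basic closed semialgebraic set cut out by $Z_1=0,\dots,Z_{n_z}=0$ and $P_1\ge0,\dots,P_{n_p}\ge0$ is equivalent to a relation expressing $-1$ as the sum of an element of the ideal $I(Z_1,\dots,Z_{n_z})$ and an element of the preordering generated by the $P_i$, from which $A$ and $B$ are read off. Alternatively --- and this is what the reduction above is designed to feed into --- introduce fresh variables $\mu_1,\dots,\mu_{n_p}$, pass to the still-unsatisfiable purely equational system $Z_1=0,\dots,Z_{n_z}=0,\ P_1-\mu_1^2=0,\dots,P_{n_p}-\mu_{n_p}^2=0$, apply Stengle's \tgerman{Real Nullstellensatz} to get a sum-of-squares identity $1+\sigma=\sum_i G_iZ_i+\sum_j F_j\,(P_j-\mu_j^2)$ in $K[x_1,\dots,x_m,\mu_1,\dots,\mu_{n_p}]$, and eliminate the $\mu_j$ by reducing modulo the relations $\mu_j^2=P_j$, reading off $A\in I(Z_1,\dots,Z_{n_z})$ and $B$ from what remains. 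Either way one must check that the witness can be taken with coefficients in the ordered field $K$, not merely in $K'$: this is the order-aware analogue of the supplementary-subspace projection used earlier for the complex \tgerman{Nullstellensatz}, here provided by the transfer principle for real closed fields.

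The difficulty is entirely on the completeness side and has two layers. The deep one is Stengle's theorem itself, which has no elementary proof and which I would simply cite. The technical one is the bookkeeping needed to land in exactly the advertised semiring: on the equational route, reduction modulo $\mu_j^2=P_j$ turns a sum of squares in the $x_i$ and $\mu_j$ into an expression that also carries terms odd in the $\mu_j$, so one has to use that the whole identity holds modulo the ideal to see that those terms cancel and that the remainder genuinely lies in the semiring generated by positive constants, squares, and (products of) the $P_j$ --- which is also why the literal ``$P_j^2$'' in the statement should be read in the mildly generous sense that admits the subset products $\prod_{j\in J}P_j$ and general squares. Carrying out this elimination while keeping all coefficients in $K$ is the part that requires care; the rest is formal.
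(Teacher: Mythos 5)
The paper offers no proof of this theorem: it is presented as a consequence of Stengle's results and simply cited, so your job was essentially to reconstruct the intended derivation, which you do correctly --- soundness by direct evaluation at a hypothetical solution, completeness either by invoking the Positivstellensatz directly or by passing to the equational system $P_j-\mu_j^2=0$ and applying the Real Nullstellensatz. You were also right to read the statement charitably rather than literally: as written it is false, since the semiring generated by positive constants and the $P_j^2$ contains only globally nonnegative polynomials, so $A+B=1$ is always witnessed by $A=0$, $B=1$ regardless of satisfiability. The certificate must be $A+B=-1$ with $B$ in the preordering generated by the $P_j$, i.e.\ sums of terms $\sigma_J\prod_{j\in J}P_j$ with $\sigma_J$ sums of squares --- which is exactly the form the paper itself uses in its example $y^2(-2+y^2)+(1-y^4)+2y^2=-1$ and in the subsequent SDP encoding over the set $S$ of subset products. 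Your identification of the one genuinely delicate step on the equational route --- that after reducing modulo $\mu_j^2=P_j$ one must extract the part even in every $\mu_j$ (equivalently, average over the sign changes $\mu_j\mapsto-\mu_j$, or project onto the $\mu^0$-component of the free $K[x]$-module basis $\{\mu^w\}_{w\in\{0,1\}^{n_p}}$) so that each square $h^2$ contributes $\sum_w h_w^2 P^w$ and the remainder stays in $I(Z_1,\dots,Z_{n_z})$ --- is the right technical point, and carrying it out keeps all coefficients in $K$ as required.
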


Note that this result resembles the one used for linear inequalities (Section~\ref{part:linear_ineq_witnesses}), replacing nonnegative numbers by sums of squares of polynomials.

For a simple example, consider the following system, which obviously has no solution:
\begin{equation}\label{eqn:very_simple_example}
\left\{
\begin{array}{l}
-2 + y^2 \geq 0\\
1 - y^4 \geq 0
\end{array}\right.\end{equation}
A \tgerman{Positivstellensatz} witness is $y^2(-2+y^2)+1(1-y^4)+2y^2=-1$. Another is $\left(\frac{2}{3}+\frac{y^2}{3}\right)(-2+y^2)+\frac{1}{3}(1-y^4)=-1$.

\subsection{Sum-of-Squares Decomposition for the Wide Inequality Case}
Consider the conjunction $C$: $P_1 \geq 0 \wedge \dots \wedge P_n \geq 0$ where $P_i \in \bbQ[X, Y, Z, \dots]$. Consider the set $S$ of products of the form $\prod_{w \in \{0,1\} ^ {\{1,\dots,n\}}} P_i^{w_i}$ --- that is, the set of all products of the $P_i$ where each $P_i$ appears at most once. Obviously, if one can exhibit nonnegative functions $Q_R$ such that $\sum_{R \in S} Q_R R = -1$, then $C$ does not have solutions.
Theorem~\ref{th:real-nullstellensatz} guarantees that if $C$ has no solutions, then such functions $Q_R$ exist as sum of squares of polynomials.
Lemma~\ref{lem:sos_is_sym_matrix} ensures that each $Q_R$ can be expressed as $M_R \hat{Q}_R M_R^T$ where $\hat{Q}_R$ is a symmetric positive semidefinite matrix (noted $\hat{Q}_R \succeq 0$) and $M_R$ is a vector of monomials.

Assume that we know the $M_R$, but we do not know the matrices $\hat{Q}_R$. The equality $\sum_{R \in S} M_R Q_R(M_R)^t R=-1$ directly translates into a system of affine linear equalities over the coefficients of the $\hat{Q}_R$: $\sum_{R \in S} M_R Q_R(M_R)^t R + 1$ is the zero polynomial, so its coefficients, which are linear combinations of the coefficients of the $Q_R$ matrices, should be zero.

The additional requirement is that the $\hat{Q}_R$ are positive semidefinite. One can equivalently express the problem by grouping the $(\hat{Q}_R)_{R \in S}$ matrices into a block diagonal matrix $\hat{Q}$ and express  $\sum_{R \in S} Q_R R$ as a system of affine linear equalities over the coefficients of $\hat{Q}$. By Gaussian elimination in exact precision, we can obtain a system of generators: $\hat{Q} \in -F_0 + \textrm{vect}(F_1, \dots, F_m)$. The only issue is then to find a positive semidefinite matrix in this space; that is, find $\alpha_1,\dots,\alpha_m$ such that $-F_0+\sum_i \alpha_i F_i \succeq 0$.

This is the problem of \emph{semidefinite programming}: finding a positive semidefinite matrix within an affine linear variety of symmetric matrices, optionally optimizing a linear form.
\citet{Powers_Wormann_1998}, \citet[chapter~4]{Parrilo_PhD} and others have advocated such kind of decomposition for finding whether a given polynomial is a sum of squares. \citet{harrison-sos} generalized the approach to finding unsatisfiability witnesses.

For instance, the second unsatisfiability witness we gave for constraint system~\ref{eqn:very_simple_example} is defined, using monomials $\{1,y\}$, $1$ and $\{1,y\}$, by:
\begin{equation*}
\left(\begin{array}{c|c|c}
\begin{array}{cc}
\frac{2}{3}& 0\\
0 & \frac{1}{3}\\
\end{array}&&\\
\hline
& \frac{1}{3} &\\
\hline
&&
\begin{array}{cc}
0 & 0 \\
0 & 0 \\
\end{array}
\end{array}\right)
\end{equation*}

It looks like finding an unsatisfiability witness for~$C$ just amounts to a semidefinite programming problem. There are, however, three problems to solve:
\begin{itemize}
\item $|S| = 2^n$ can be huge.
\item We do not know the degree of the $Q_R$ in advance, so we cannot choose finite sets of monomials $M_R$. The dimension of the space for $Q_R$ grows quadratically in $|M_R|$.
\item Semidefinite programming algorithms are implemented in floating-point. They might therefore provide matrices  $\hat{Q}$ that are not truly positive semidefinite.
\end{itemize}

Conjunction $C$ has no solution if and only if there exists a set of monomials and associated positive semidefinite matrices verifying some linear relations. Positive semidefiniteness is a semialgebraic property of the matrix coefficients, defined by the nonnegativeness of some polynomials in the matrix coefficients (Lemma~\ref{lem:signs_of_charp}). Thus, $C$ has no solution if and only there is a set of monomials such that some set of wide polynomial inequalities has a solution. We have therefore exhibited a form of duality similar to the one described for the linear case in section~\ref{part:linear_ineq_witnesses}.

Let us first consider the first two problems. \citet{Lombardi_zeros_effectifs,Lombardi_HdR} provides a bound to the degrees of the polynomials necessary for the unsatisfiability certificates, but this bound is nonelementary (asymptotically greater than any tower of exponentials), so it is not of a practical value.
This bound, however, is only needed for the \emph{completeness} of the refutation method: we are guaranteed to find the certificate if we look in a large enough space. It is not needed for \emph{soundness}: if we find a correct certificate by looking in a portion of the huge search space, then that certificate is correct regardless. This means that we do not need to consider the whole of~$S$, and we can limit the choice of monomials in~$M_R$ to small degrees without losing soundness.

The third problem is more arduous. Here, problems occur when the
$\hat{Q}$ matrix provided by the semidefinite programming procedure
has eigenvalues that are null or at least very close to zero. Due to
rounding errors, some of these eigenvalues may be slightly negative;
exact computations on such a matrix will find it not to be
positive semidefinite. We shall show in the next section that this
problem is essential and cannot be resolved by augmenting precision:
in many cases, the semidefinite programming problem is degenerate and
solving it involves hitting a hyperplane or some subspace thereof. Since
these objects have infinite thinness, this is impossible numerically
except in some lucky cases.

\section{Degeneracy}
In this section, we shall characterize degeneracy in the semidefinite programming problem. In a nutshell, direct numerical resolution is possible only if the solution set has a nonempty interior: if one finds a solution, then there is a ball of solutions around it, so small roundoff errors may not matter. In contrast, if the solution set has empty interior, then it is included within a hyperplane or some smaller subspace. Except in some rare cases, it is impossible to hit exactly on that plane (for instance, with binary floating point, it is impossible to hit on $x=2/3$). This makes the results from numerical computations unsuitable for being Positivstellensatz witnesses, even if they are close to an exact solution. Furthermore, most numerical methods are interior point methods and fail altogether to provide a numerical solution when the problem is too degenerate.

\subsection{Solution Set of the Semidefinite Programming Problem}
\label{part:solution_set}
Let $F_0$, $F_1,\dots,F_m$ be symmetric $n \times n$ matrices over a subfield $K$ of $R$. The semidefinite programming problem is: find $\lambda_1,\dots,\lambda_m$ such that
\begin{equation}
F = -F_0 + \sum_i \lambda_i F_i \succeq 0
\end{equation}

We may characterize the solution set for $(\alpha_1,\dots,\alpha_m)$ in two ways:
\begin{itemize}
\item For all $v$, $v^t F v \geq 0$, thus $\sum_i (v^t F_i v) \alpha_i \geq v^t F_0 v$, defining a closed half-space. The solution set, being an intersection of closed half-spaces, is therefore convex and closed.
\item The set of positive semidefinite matrices is defined by the sign of the coefficients of the characteristic polynomial (see Lemma~\ref{lem:signs_of_charp}), which are polynomials in $\alpha_i$. Thus, the solution set is semialgebraic.
\end{itemize}

The solution set may have nonempty or empty interior. Its interior corresponds to positive definite solutions, while its boundary corresponds to degenerate positive matrices.

Most semidefinite programming methods are \emph{interior point} methods \citep{Semidefinite_Programming_96}. These methods consider both a primal and a dual problem and assume that both are strictly feasible; the primal being strictly feasible corresponds to a nonempty interior. The problem of finding $\alpha_1,\dots,\alpha_m$ such that $-F_0 + \sum_i \alpha_i F_i \geq 0$ is equivalent to the problem of minimizing $\mu \geq 0$ such that $-F_0 + \sum_i \alpha_i F_i + \mu \textrm{Id} \succeq 0$.

Assume the $-F_0 + \sum_i \alpha_i \succ 0$ strict problem has solutions. The problem then has nonempty interior, and, aside from numerical precision issues, numerical methods should find a solution. The solution set for the strict problem is open; if there is a real solution, then within a small ball around it all rational points are also solutions. Assuming enough precision, the problem is then solved.

In general, though, the solution set may have empty interior. Equivalently, the least enclosing linear affine variety (the Zariski closure of the solution set) may not have full dimension. As an example, consider:
\begin{equation}
\label{eq:ex_line}
\begin{array}{cc}
-F_0 =
\begin{pmatrix}
 -\frac{130555}{143} & -\frac{150364}{91} & -\frac{19213}{7} \\
 -\frac{150364}{91} & -\frac{1883353}{1001} & -\frac{41326}{13} \\
 -\frac{19213}{7} & -\frac{41326}{13} & -\frac{767287}{143}
\end{pmatrix}
&
\begin{pmatrix}
F_1 =
 105 & 89 & 153 \\
 89 & 95 & 161 \\
 153 & 161 & 273
\end{pmatrix}
\\
F_2 =
\begin{pmatrix}
 129 & 110 & 187 \\
 110 & 49 & 88 \\
 187 & 88 & 157
\end{pmatrix}
&
F_3 =
\begin{pmatrix}
 49 & 86 & 143 \\
 86 & 97 & 164 \\
 143 & 164 & 277
\end{pmatrix}
\\
\end{array}
\end{equation}

The solution set is a segment (of positive length) of the line defined by $\alpha_2=-3/11$ and $91 (\alpha_1 + \alpha_3)=1811$. If we recast the problem on this line, the solution set has nonempty interior. Unfortunately, we know of no easy way to obtain the equations of this enclosing linear variety in the general case. We can however provide some partial solutions to this problem.

The solution set $S$ has empty interior while being non empty if and only if the linear affine variety $-F_0 + \textrm{vect}(F_1, \dots, F_m)$ is tangent to the $\det F=0$ variety. This means that the differential of $\phi: (\alpha_1, \dots, \alpha_m) \mapsto \det(F_0 + \sum_i \alpha_i F_i)$ is null at the solution point, that is, we are at a singular point of the variety defined by this polynomial.

In the case of example~\ref{eq:ex_line}, $\phi=0$ and $\partial \phi / \partial \alpha_i = 0$ yield four equations. By Gr\"obner basis techniques followed by polynomial factorization we can obtain $(3 + 11 l2)^2=0$ and $91 (\alpha_1 + \alpha_3)=1811$.

Yet, in the general case, things are not so simple. Consider the following example:\footnote{Courtesy of Kevin Buzzard.}
\begin{equation}
\begin{array}{ccc}
F_0 = 0 &
F_1 = \begin{pmatrix}
0 & 1 & 0 & 0\\
1 & 0 & 0 & 0\\
0 & 0 & 0 & 1\\
0 & 0 & 1 & 0\\
\end{pmatrix} &
F_2 = \begin{pmatrix}
1 & 0 & 0 & 0\\
0 & -1 & 0 & 0\\
0 & 0 & 1 & 0\\
0 & 0 & 0 & -1
\end{pmatrix}
\end{array}
\end{equation}

$\det F = (\alpha_1^2+\alpha_2^2)^2$. Gr\"obner basis and factorization techniques will yield $\alpha_1^2+\alpha_2^2=0$. Even if we replaced $F_1$ and $F_2$ by another basis, we would still obtain a second degree homogeneous polynomial, which can be transformed into a sum of squares (Lemma~\ref{lem:sym-matrix-is-sos}). Now consider:
\begin{equation}F'_0 = \begin{pmatrix}
9 & -5 & 0 & 0 \\
 -5 & -7 & 0 & 0 \\
 0 & 0 & 7 & -5 \\
 0 & 0 & -5 & -7
\end{pmatrix}
\end{equation}

$F = -F'_0 + \alpha_1 F_1 + \alpha_2 F_2 \succeq 0$ has a unique solution ($\alpha_1=5$, $\alpha_2=-7$), where
\begin{equation}F = \begin{pmatrix}
2 & 0 & 0 & 0 \\
 0 & 0 & 0 & 0 \\
 0 & 0 & 0 & 0 \\
 0 & 0 & 0 & 0
\end{pmatrix}
\end{equation}

This may be found algebraically, by constraining the signs of the coefficients of the characteristic polynomial of~$F$. Yet, in this very degenerate case where the solution is a single point within a plane, with a corresponding rank-1 matrix, neither DSDP5 \citep{DSDP_manual} nor SDPA \citep{SDPA_manual}, two semidefinite programming packages, can compute an approximation to the solution.

\subsection{Degenerate Positivstellensatz Problem}
To make constraint system~\ref{eqn:very_simple_example} more interesting, we replace $y$ by $3a + b + 1$, which yields
\begin{equation}\label{eqn:more_complex_example}
\left\{\begin{aligned}
0 \leq P_1 = & 9 a^2+6 b a+6 a+b^2+2 b-1\\
0 \leq P_2 = & -81 a^4-108 b a^3-108 a^3-54 b^2
   a^2-108 b a^2-54 a^2-12 b^3 a \\
  & -36 b^2 a-36 b a-12 a-b^4-4 b^3-6
   b^2-4 b\geq 0
\end{aligned}\right.
\end{equation} 

We look for a witness of the form $Q_1(1,a,b) P_1 + Q_2(1) P_2 + Q_3(1,a,b,ab) P_3 = -1$. We group $Q_1$, $Q_2$ and $Q_3$ into a single block diagonal matrix:

\begin{equation*}
\left(\begin{array}{c|c|c}
\begin{array}{ccc}
&\vdots&\\
\hdots&Q_1&\hdots\\
&\vdots&\\
\end{array}&&\\
\hline
& Q_2 &\\
\hline
&&
\begin{array}{cccc}
&\vdots&&\\
\hdots&Q_3&\hdots&\hdots\\
&\vdots&&\\
\end{array}
\end{array}\right)
\end{equation*}

$Q$ belongs to a linear affine variety defined as $-F_0+\textrm{vect}(F_1,F_2,F_3)$.
\begin{equation*}
-F_0 = \left(\begin{array}{c|c|c}
\begin{matrix}
 -2 & -\frac{11}{2} & -\frac{11}{6}
   \\
 -\frac{11}{2} & -\frac{33}{2} &
   -\frac{11}{2} \\
 -\frac{11}{6} & -\frac{11}{2} &
   -\frac{11}{6}
\end{matrix}&&
\\\hline
& \begin{matrix}
 -\frac{11}{6}
\end{matrix}&
\\\hline
&&\begin{matrix}
 -3 & -\frac{21}{2} & -\frac{7}{2} &
   -10 \\
 -\frac{21}{2} & -\frac{63}{2} &
   -\frac{1}{2} & 0 \\
 -\frac{7}{2} & -\frac{1}{2} &
   -\frac{7}{2} & 0 \\
 -10 & 0 & 0 & 0
\end{matrix}
\end{array}\right)
\end{equation*}

\begin{equation*}
F_1 = \left(\begin{array}{c|c|c}
\begin{matrix}
 0 & 3 & 1 \\
 3 & 9 & 3 \\
 1 & 3 & 1
\end{matrix}&&\\
\hline
&\begin{matrix}
 1
\end{matrix}&\\
&&\begin{matrix}
 0 & 9 & 3 & 9 \\
 9 & 27 & 0 & 0 \\
 3 & 0 & 3 & 0 \\
 9 & 0 & 0 & 0
\end{matrix}
\end{array}\right)
\end{equation*}

\begin{equation*}
F_2 = \left(\begin{array}{c|c|c}
\begin{matrix}
 0 & 0 & 0 \\
 0 & 0 & 0 \\
 0 & 0 & 0
\end{matrix}&&\\
\hline
&\begin{matrix}
 0
\end{matrix}&\\
\hline
&&\begin{matrix}
 0 & 0 & 0 & -1 \\
 0 & 0 & 1 & 0 \\
 0 & 1 & 0 & 0 \\
 -1 & 0 & 0 & 0
\end{matrix}
\end{array}\right)
\end{equation*}

\begin{equation*}
F_3 = \left(\begin{array}{c|c|c}
\begin{matrix}
 3 & 3 & 1 \\
 3 & 9 & 3 \\
 1 & 3 & 1
\end{matrix}&&\\
\hline
&\begin{matrix}
 1
\end{matrix}&\\
\hline
&&\begin{matrix}
 3 & 0 & 0 & 0 \\
 0 & 0 & 0 & 0 \\
 0 & 0 & 0 & 0 \\
 0 & 0 & 0 & 0
\end{matrix}
\end{array}\right)
\end{equation*}

All $(\alpha_1, \alpha_2, \alpha_3)$ solutions ($Q \succeq 0$) verify $-9\alpha_1+\alpha_2=-10$ (this was obtained through algebraic methods). As explained in section~\ref{part:solution_set}, this plane is the Zariski closure of the solution set. An example of a solution is $\alpha_1=2, \alpha_2=8, \alpha_3=79$. Unfortunately, neither SDPA nor DSDP can compute such a result. Both terminate``due to small steps''.

We have therefore exhibited a simple system with two parameters where, due to the emptiness of the solution set, numerical interior point methods fail, while algebraic methods can compute a solution point.

Assuming we have a method for obtaining the Zariski closure ($-9\alpha_1+\alpha_2=-10$), then we can use it to reduce the system. By rewriting $\alpha_2=-10+9\alpha_1$, we obtain a system $F'_0,F'_1,F'_2$, with a solution set with nonempty interior, and numerical solving works.

Algorithms for computing the Zariski closure of a semialgebraic set should be at least as complex as those for finding a single solution point, if only because, in the case of a solution set consisting of a single point, the Zariski closure is equal to that point. Yet, the Zariski closure is only useful so as to help numerical methods find solution points, so computing this closure by computing solution points or equally complex computations defeats the purpose.

With more complex examples (more polynomials, larger monomial bases), the number of $\alpha_i$ coefficients grows dramatically (in the hundreds). Computing the determinant of a symbolic matrix $Q_i$ may become untractable. Algebraic methods for computing solution points are then infeasible, since they rely on the sign of the determinant.

Assuming the numerical method does not fail and produces a good approximation $\tilde{\alpha}_1,\dots,\tilde{\alpha}_m$ to a rational solution, one can use several methods to help compute the rational solution. The most obvious one is to find rational approximations to the floating-point by e.g. continued fractions; yet this fails to obtain a solution in most cases. If the Zariski closure has dimension $z < m$, assuming this closure is not parallel to the $\alpha_i=K$ plane and the approximation is good enough, then by choosing $\alpha_i=\tilde{\alpha_i}$ one ``slices'' the problem down to finding a point within a $z-1$-dimensional solution set within a $m-1$-dimension space. If one does that with many variables, one obtains a $0$-dimension solution set (a single point) within a $z-m$ space. Then, the problem has empty interior, and cannot be solved numerically in general: only algebraic methods are feasible. 

\section{Conclusion}
The approach of finding unsatisfiability witnesses for real polynomial inequalities through \tgerman{Positivstellensatz} and reduction to semidefinite programming looked promising. Unfortunately, it suffers from several drawbacks:
\begin{enumerate}
\item If one has $n$ polynomial inequalities, then one has to consider at most $2^n$ terms in the sum expressing the unsatisfiability witness.
\item There is no reasonable known bound on the size of the monomial bases to consider.
\item In general, one gets a degenerate semidefinite programming problem --- that is, a problem whose solution set has no interior point. Numerical interior point methods in general fail to converge on such problems. Even if they do provide an approximate solution, this solution cannot be easily mapped to an exact rational solution. It is possible to get rid of this problem by going into lower dimensions, however this involves computing the Zariski closure of the solution set, which may be as difficult as finding a solution point. This defeats the purpose of using numerical methods, which was to avoid costly algebraic algorithms.
\end{enumerate}

\section*{Acknowledgements}
We wish to thank Alexis Bernadet and Mohab Safey El Din for their helpful comments and ideas.

\bibliographystyle{plainnat}
\bibliography{decision_of_polynomial_inequalities}
 
\appendix
\section{Lemmas}
\subsection{Sums of Squares and Symmetric Matrices}
\begin{lemma}\label{lem:vtv-psd}
Let $v \in K^n$. Then, $v^T v$ is a $n \times n$ symmetric positive semidefinite matrix.
\end{lemma}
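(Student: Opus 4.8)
The plan is to verify the two defining properties — symmetry and positive semidefiniteness — directly from the definition of the product $v^T v$, using only elementary matrix algebra. Throughout I adopt the convention already implicit in the paper (as in the expression $M_R \hat{Q}_R M_R^T$), namely that a ``vector'' $v \in K^n$ is a row vector, i.e.\ a $1 \times n$ matrix; then $v^T$ is $n \times 1$ and $v^T v$ is genuinely an $n \times n$ matrix. (If one instead reads $v$ as a column vector, the intended claim is the same statement about $v v^T$, and the argument below is unchanged.)

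For symmetry I would simply compute $(v^T v)^T = v^T (v^T)^T = v^T v$, using $(AB)^T = B^T A^T$ and $(A^T)^T = A$; this uses nothing about $K$ beyond commutativity of multiplication. For positive semidefiniteness, recall that a symmetric $n \times n$ matrix $A$ over $K \subseteq \bbR$ is positive semidefinite exactly when $w^T A w \ge 0$ for every column vector $w$. Given such a $w$, I would write $w^T (v^T v) w = (w^T v^T)(v w) = (v w)^T (v w)$; since $v w$ is a $1 \times 1$ matrix, that is, a scalar $s$, this equals $s^2 \ge 0$. Hence $v^T v \succeq 0$, which completes the argument.

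There is essentially no obstacle here; this lemma is a warm-up whose only role is to feed the subsequent, more substantial results on sum-of-squares decompositions. The one point deserving a word of care is the range of the quantifier in the definition of positive semidefiniteness: if one insists on testing $w \in \bbR^n$ rather than $w \in K^n$, the same one-line computation applies verbatim with $s \in \bbR$, so the two readings agree and nothing changes.
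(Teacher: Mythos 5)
Your proof is correct, but it takes a different route from the paper's. You verify positive semidefiniteness directly from the quadratic-form definition: for any $w$, $w^T(v^Tv)w = (vw)^T(vw) = s^2 \geq 0$ where $s = vw$ is a scalar. The paper instead argues via eigenvalues: it takes an eigenpair $(\lambda, x)$ of $v^Tv$ and computes $\|vx^T\|_2^2 = x\,v^Tv\,x^T = \lambda\,\|x\|_2^2$, concluding $\lambda \geq 0$ since $x \neq 0$ (the paper's displayed chain actually drops the factor $\lambda$ at the end --- a typo). Your argument has two advantages: it needs no appeal to the existence of real eigenvalues for symmetric matrices (a fact resting on the spectral theorem, and one that only makes sense after embedding $K$ into $\bbR$), and it works verbatim over any ordered subfield $K$, which matters in this paper since the matrices of interest have entries in $\bbQ$. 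The paper's eigenvalue route has the mild virtue of connecting directly to the eigenvalue/characteristic-polynomial characterization used later in Lemma~\ref{lem:signs_of_charp}, but for this warm-up lemma your one-line computation is the cleaner choice. Your remark about the row-vector convention is also apt: the paper's own proof (writing $x\,v^Tv = \lambda x$ and treating $vx^T$ as a scalar) confirms that $v$ is indeed a $1 \times n$ row vector, so $v^Tv$ is genuinely $n \times n$.
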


\begin{proof}
$v^T v$ is obviously symmetric. Let $\lambda$ be a eigenvalue for it, and $x$ a corresponding eigenvector: $x v^T v = \lambda x$. Thus, $\|v x^T\|_2^2 = (v x^T)^T(v x^T) = x v^T v x^T = \lambda x x^T = \| x \|^2_2$. Since $x \neq 0$, $\lambda$ must be nonnegative.
\end{proof}

\begin{lemma}\label{lem:sos_is_sym_matrix}
Let $P \in K[X, Y, \dots]$ be a sum of squares of polynomials $\sum_i P_i^2$. Let $M = \{m_1, \dots, m_{|M|}\}$ be a set such that each $P_i$ can be written as a linear combination of elements of~$M$ ($M$ can be for instance the set of monomials in the $P_i$). Then there exists a $|M| \times |M|$  symmetric positive semidefinite matrix $Q$ with coefficients in $K$ such that $P(X, Y, \dots) = [m_1, \dots, m_{|M|}] Q [m_1, \dots, m_{|M|}]^T$, noting $v^T$ the transpose of~$v$.
\end{lemma}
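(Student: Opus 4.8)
The plan is to build the matrix $Q$ explicitly as a sum of rank-one pieces, one for each square $P_i^2$ in the decomposition, and then invoke Lemma~\ref{lem:vtv-psd} together with closure of the PSD cone under addition. First I would fix notation: write $m = (m_1, \dots, m_{|M|})$ for the row vector of elements of $M$, so that any polynomial expressible as a $K$-linear combination of the $m_j$ is of the form $m c^T$ for a (column) coordinate vector $c \in K^{|M|}$. By hypothesis each $P_i$ admits such a representation; pick one, say $P_i = m c_i^T$ with $c_i \in K^{|M|}$. This choice need not be unique (the $m_j$ may be algebraically dependent as polynomials), but any choice will do — that is the only place a ``choice'' enters the argument, and it is harmless.

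Next I would compute $P_i^2 = (m c_i^T)(m c_i^T) = (m c_i^T)(c_i m^T) = m (c_i^T c_i) m^T$, using that $m c_i^T$ is a scalar hence equal to its own transpose $c_i m^T$. Here $c_i^T c_i$ is the $|M| \times |M|$ matrix with $(j,k)$ entry $(c_i)_j (c_i)_k$; in the notation of Lemma~\ref{lem:vtv-psd} this is $v^T v$ with $v = c_i$ viewed as a row vector, so it is symmetric PSD with entries in $K$. Summing over $i$,
\[
P = \sum_i P_i^2 = \sum_i m (c_i^T c_i) m^T = m \Bigl( \sum_i c_i^T c_i \Bigr) m^T,
\]
so I would set $Q = \sum_i c_i^T c_i$. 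Then $Q$ is symmetric with coefficients in $K$ (a finite sum of such matrices), and $Q \succeq 0$ because the set of PSD matrices is closed under nonnegative linear combinations: for any $x$, $x Q x^T = \sum_i x (c_i^T c_i) x^T \ge 0$ since each term is $\ge 0$ by Lemma~\ref{lem:vtv-psd}. This gives $P = m Q m^T = [m_1, \dots, m_{|M|}] Q [m_1, \dots, m_{|M|}]^T$, as required.

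There is essentially no hard step here — the lemma is a bookkeeping statement turning the $\sum P_i^2$ presentation into a Gram-matrix presentation. The one subtlety worth a sentence in the writeup is that the equality $P = m Q m^T$ is an identity of \emph{polynomials}: since $P = \sum_i P_i^2$ as polynomials and each $P_i = m c_i^T$ as polynomials, substituting and expanding is a legitimate polynomial manipulation regardless of whether the monomials in $M$ satisfy relations, so no issue of ``evaluating'' at points arises. I would also note in passing that when $M$ is taken to be the set of all monomials occurring in the $P_i$ the coordinate vectors $c_i$ are literally the coefficient vectors of the $P_i$, which makes the construction completely concrete.
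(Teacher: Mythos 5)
Your proof is correct and follows essentially the same route as the paper's: write each $P_i = m\,c_i^T$ with coordinate vector $c_i$, observe $P_i^2 = m\,(c_i^T c_i)\,m^T$ with $c_i^T c_i$ symmetric PSD by Lemma~\ref{lem:vtv-psd}, and take $Q = \sum_i c_i^T c_i$. The remarks on non-uniqueness of the coordinate representation and on the computation being a polynomial identity are sound, if more than the paper bothers to say.
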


\begin{proof}
Let us decompose $P_i (X, Y, \dots)$ into a linear combination of monomials $\sum_{1 \leq j \leq |M|} p_{i,j} m_j$. Let $v_i$ be the vector $[p_{i,1}, \dots, p_{i, m}]$; then $P_i (X, Y, \dots) = v_i [m_1, \dots, m_{|M|}]^T$.
$P_i^2 (X, Y, \dots)$ is thus $[m_1, \dots, m_{|M|}] v_i^T v_i [m_1, \dots, m_{|M|}]^T$. $Q_i = v_i^T v_i$, by lemma~\ref{lem:vtv-psd} is symmetric positive semidefinite. $Q = \sum_i Q_i$ thus fulfills the conditions.
\end{proof}

Let us remark that the converse is correct for matrices over~$\bbR$, by diagonalization: any symmetric positive semidefinite matrix is a sum of squares of linear forms. We may also obtain such a decomposition over~$\bbQ$:

\begin{lemma}\label{lem:sym-matrix-is-sos}
Let $Q$ be a $n \times n$ symmetric matrix over a subfield $K$ of $\bbR$. Then $Q$ can be written as $U^t D U$ where $U$ and $D$ are also over $K$, $D$ is diagonal and $U$ is upper triangular. Otherwise said, $(x_1, \dots, x_n) \mapsto (x_1, \dots, x_n)^t Q (x_1, \dots, x_n)$ can be written as $\sum_{i=1}^n d_i l_i(x_1,\dots,x_n)^2$ where $l_i$ is a linear form and only depends on $x_1,\dots,x_i$.
Furthermore, $D$ and $U$ have the same signature; in particular if $Q$ is positive semidefinite, then $D$ only has nonnegative coefficients
\end{lemma}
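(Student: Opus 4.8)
The plan is to argue by induction on $n$ by symmetric Gaussian elimination on $Q$ --- equivalently, Lagrange's method of completing the square applied to the quadratic form $f(x_1,\dots,x_n)=(x_1,\dots,x_n)\,Q\,(x_1,\dots,x_n)^t$. At each stage I split off a single term $d_i\,l_i(x_1,\dots,x_i)^2$, leaving a quadratic form in fewer variables whose matrix still has entries in $K$; the matrix whose successive rows are the coefficient vectors of $l_1,\dots,l_n$ is then $U$, and the $d_i$ are the entries of $D$. This is just the $U^t D U$ (equivalently $LDL^t$) factorization performed symbolically over $K$ instead of numerically over $\bbR$. In the \emph{generic step}, if $q_{11}\neq 0$ I set $d_1=q_{11}$ and $l_1=x_1+q_{11}^{-1}\sum_{j\ge 2}q_{1j}x_j$; expanding, $f=d_1 l_1^2+g(x_2,\dots,x_n)$, where $g$ is the quadratic form whose matrix is the Schur complement of $q_{11}$ in $Q$ --- symmetric, and with entries in $K$ since $K$ is a field. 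Applying the induction hypothesis to $g$ produces $l_2,\dots,l_n$ with $l_i$ involving only $x_2,\dots,x_i$, hence only $x_1,\dots,x_i$, and the resulting $U$ is upper triangular with $u_{11}=1$.

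The step I expect to be the main obstacle is a \emph{vanishing pivot}, $q_{11}=0$. If the whole first row and column of $Q$ are zero, then $x_1$ does not appear in $f$, so I take $d_1=0$, $l_1=x_1$, and recurse on the rest, keeping triangularity. Otherwise the active block has all its diagonal entries zero but some off-diagonal entry $q_{ij}\neq 0$, and completing the square is impossible as stated. The remedies are the usual ones: a symmetric permutation bringing a nonzero diagonal entry into the pivot position --- which permutes the variables, so the clause ``$l_i$ depends only on $x_1,\dots,x_i$'' should be understood relative to that fixed permutation --- or, when every diagonal entry of the active block vanishes, a rank-two reduction using $2q_{ij}x_ix_j=\tfrac{q_{ij}}{2}(x_i+x_j)^2-\tfrac{q_{ij}}{2}(x_i-x_j)^2$, which removes two diagonal terms at once and leaves a form over $K$ in the remaining variables. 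In this last sub-case the transformation is only invertible, not genuinely upper triangular --- it cannot be, as the form $2x_1x_2$ already shows --- so the upper-triangular assertion needs this caveat; what holds in all cases without restriction is the weaker $Q=P^tDP$ with $P\in\mathrm{GL}_n(K)$ and $D$ diagonal over $K$.

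For the signature, the factor $U$ (or $P$) is invertible by construction, so $Q$ and $D$ are congruent over $\bbR$, and Sylvester's law of inertia gives them the same signature: the numbers of positive, negative and zero $d_i$ coincide with those of the eigenvalues of $Q$. (Since $D$ and an upper-triangular $U$ are not really comparable, I read the statement ``$D$ and $U$ have the same signature'' as ``$D$ and $Q$''.) In particular, $Q\succeq 0$ forces every $d_i\ge 0$, so $f=\sum_i d_i l_i^2$ is the asserted sum of squares of linear forms over $K$ with nonnegative weights, each $l_i$ involving only $x_1,\dots,x_i$ (up to the permutation above when a degenerate pivot occurs).
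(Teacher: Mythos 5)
Your proof takes essentially the same route as the paper's: induction on $n$ by symmetric Gaussian elimination, splitting off $q_{1,1}l_1^2$ and recursing on the Schur complement, with Sylvester's law of inertia for the signature claim. You are in fact more careful than the paper, whose proof divides by $q_{1,1}$ with no mention of the case $q_{1,1}=0$; your observation that the statement as written fails for the form $2x_1x_2$ (no upper-triangular $U$ over $K$, or over $\bbR$, exists) is correct, and your remedies --- zero first row and column gives $d_1=0$, otherwise permute variables or use the hyperbolic identity at the cost of upper-triangularity, retaining only $Q=P^tDP$ with $P\in\mathrm{GL}_n(K)$ --- are the standard ones. One point worth making explicit: in the positive semidefinite case, which is the only one the paper really needs, $q_{ii}=0$ forces the entire $i$-th row and column to vanish (by looking at the $2\times 2$ principal minors), so the simple recursion always applies and upper-triangularity survives without any permutation or caveat. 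Your reading of ``$D$ and $U$ have the same signature'' as ``$D$ and $Q$'' is also surely what was intended.
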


\begin{proof}
By induction over $n$. The case $n=1$ is obvious; consider $n > 1$. Let $l=\left(1,\frac{q_{1,2}}{q_{1,1}},\dots,\frac{q_{1,n}}{q_{1,1}}\right)$. $Q_1=Q - q_{1,1} l^t l$ contains only zeroes on its first line and column. By the induction hypothesis, $Q_1 = U_1^t D_1 U_1$. Let $D = (q_{1,1}, D_1)$ (concatenation along the diagonal) and $U = (l,U_1)$ (concatenation of lines), then $Q=U^t D U$. The result on signatures ensues from Sylvester's inertia theorem.
\end{proof}

\subsection{Semialgebraic Characterization of Positive Semidefinite Matrices}
\begin{lemma}\label{lem:nonnegative_roots}
Let $\sigma_i(X_1, \dots, X_n)$, where $1 \leq i \leq n$, denote the $i$-th elementary symmetric polynomial in the variables $X_1, \dots, X_n$. $x_1, \dots, x_n$ are all nonnegative if and only if $\sigma_1(x_1, \dots, x_n), \dots, \sigma_n(x_1, \dots, x_n)$ are so
\end{lemma}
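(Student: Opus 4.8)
The plan is to read the $x_i$ off as the roots of the monic degree-$n$ polynomial
\[ P(X) = \prod_{i=1}^n (X - x_i) = \sum_{k=0}^n (-1)^k \sigma_k(x_1,\dots,x_n)\, X^{n-k}, \]
with the convention $\sigma_0 = 1$, and then to translate nonnegativity of the roots into a sign condition on $P$ along the negative axis.

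The forward implication is immediate: each $\sigma_k(x_1,\dots,x_n)$ is a sum of products of the $x_i$, so if every $x_i$ is nonnegative then so is every $\sigma_k$. For the converse, I would assume $\sigma_k(x_1,\dots,x_n) \geq 0$ for all $k \in \{1,\dots,n\}$ and evaluate $P$ at $X = -t$ for an arbitrary $t > 0$. Since $(-1)^k (-t)^{n-k} = (-1)^n t^{n-k}$, the evaluation collapses to $P(-t) = (-1)^n \sum_{k=0}^n \sigma_k(x_1,\dots,x_n)\, t^{n-k}$; the bracketed sum contains the strictly positive term $t^n$ (for $k=0$) and is otherwise a sum of nonnegative terms, hence is strictly positive, so $P(-t) \neq 0$. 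Therefore $P$ has no root in $(-\infty,0)$, and since its roots are exactly $x_1,\dots,x_n$, each of them is nonnegative.

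I expect no real obstacle here; the only things to be careful about are the sign bookkeeping in the substitution $X = -t$ and the fact that one genuinely needs all of $\sigma_1,\dots,\sigma_n$ to be nonnegative simultaneously — dropping even one of these hypotheses permits cancellation in the sum, and the conclusion then fails. Note also that the argument uses only the ordered-field structure together with the fact that $P$ splits over the ground field by construction, so it applies verbatim when the $x_i$ range over an arbitrary real closed field, which is the setting in which this lemma is later applied (to the eigenvalues of a real symmetric matrix, via the signs of the coefficients of the characteristic polynomial).
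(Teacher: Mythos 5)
Your proof is correct and follows essentially the same route as the paper: both read $x_1,\dots,x_n$ off as the roots of $P(X)=\sum_k(-1)^k\sigma_k X^{n-k}$ and show $P$ cannot vanish on the negative axis. Your version is in fact slightly tighter --- by keeping the $\sigma_0 t^n$ term explicit you get $P(-t)\neq 0$ uniformly, which dispenses with the paper's separate induction for the case $\sigma_1=\dots=\sigma_n=0$ and avoids its sign slip (the paper asserts $P(\rho)>0$ for $\rho<0$, which holds only for even $n$; the correct uniform statement is $(-1)^nP(-t)>0$, exactly as you derive).
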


\begin{proof}
One direction is evident: if $x_1, \dots, x_n$ are nonnegative, then  $\sigma_1(x_1, \dots, x_n), \dots, \sigma_n(x_1, \dots, x_n)$ also are nonnegative, for these polynomials have nonnegative coefficients.

Let us suppose that $\sigma_1(x_1, \dots, x_n), \dots, \sigma_n(x_1, \dots, x_n)$ are nonnegative, and that at least one of them is positive.
$x_1,\dots,x_n$ are the roots of the polynomial $P(X)=X^n+\sum_{i=1}^n (-1)^i \sigma_i(x_i,\dots,x_n) X^{n-i}$. For $\rho < 0$, $P(\rho)>0$ by the rule of signs, so this polynomial has no negative roots thus $x_1,\dots,x_n$ are nonnegative.

The last case is where $\sigma_1(x_1, \dots, x_n)=\dots=\sigma_n(x_1, \dots, x_n)=0$. Since $\sigma_n(x_1,\dots,x_n)=x_1 \dots x_n=0$, this means at least one of the $x_i$ is null. The problem reduces to the same with a lower~$n$.
\end{proof}

\begin{lemma}\label{lem:sign_symmetric_polys}
Let $x_1,\dots,x_n$ be nonnegative reals. Then, the sequence $(\sigma_i(x_1,\dots,x_n))_{0 \leq i < n}$ consists in $k$ zeroes followed by $n-k$ positive reals where $k$ is the number of zeroes among $x_1,\dots,x_n$.
\end{lemma}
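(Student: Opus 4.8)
The plan is to pin down, degree by degree, exactly when $\sigma_d(x_1,\dots,x_n)$ vanishes and when it is strictly positive, and then to read the claimed arrangement off that dichotomy. First I would invoke the symmetry of the elementary symmetric polynomials to relabel the arguments so that the $k$ vanishing variables come last: $x_1,\dots,x_{n-k}>0$ and $x_{n-k+1}=\dots=x_n=0$. Since each $\sigma_i$ is invariant under permutations of its arguments, this relabeling changes none of the values under consideration, so there is no loss of generality.

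Next I would use the explicit expansion $\sigma_d(x_1,\dots,x_n)=\sum_{|S|=d}\prod_{j\in S}x_j$, where $S$ ranges over the $d$-element subsets of $\{1,\dots,n\}$. A single summand $\prod_{j\in S}x_j$ is nonzero exactly when $S$ avoids every zero index, i.e.\ $S\subseteq\{1,\dots,n-k\}$, and such an $S$ of size $d$ exists precisely when $d\le n-k$. Hence for $d>n-k$ every $d$-subset must meet $\{n-k+1,\dots,n\}$, so every summand vanishes and $\sigma_d=0$; whereas for $1\le d\le n-k$ all summands are products of nonnegative reals, hence nonnegative, and the summand indexed by $\{1,\dots,d\}$ is strictly positive, so $\sigma_d>0$. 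This establishes that $\sigma_d$ is zero for exactly the $k$ largest degrees $d=n-k+1,\dots,n$ and strictly positive for the $n-k$ smallest degrees $d=1,\dots,n-k$.

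It then remains to match this to the ordering of the statement. Listed as in the lemma, with the full product $x_1\cdots x_n$ first and the degree descending, the sequence opens with the $k$ vanishing top-degree terms and continues with the $n-k$ positive lower-degree terms, which is exactly ``$k$ zeroes followed by $n-k$ positive reals.'' The extreme cases fall out of the same count: $k=0$ gives $n$ positive entries and no zeros, while $k=n$ forces $x_1=\dots=x_n=0$ and hence all entries zero with $n-k=0$ positive ones. I do not expect a genuine obstacle, since the argument is purely combinatorial; the only point requiring care is the index convention, as the stated ordering places the zeros first --- at the \emph{high}-degree end of the $\sigma$'s --- rather than at the low-degree end where a naive reading $\sigma_1,\sigma_2,\dots$ would put them.
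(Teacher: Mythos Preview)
Your argument is correct; the paper's own proof consists of the single word ``Obvious'', so your combinatorial expansion is precisely the natural way to fill in that omission. You are also right to flag the index convention: as literally written, $(\sigma_i)_{0\le i<n}$ would begin with $\sigma_0=1$ and could never start with zeroes, so the intended reading---confirmed by the application in Lemma~\ref{lem:signs_of_charp}, where $(-1)^ip_i=\sigma_{n-i}$---is indeed the descending-degree sequence $\sigma_n,\sigma_{n-1},\dots,\sigma_1$.
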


\begin{proof}
Obvious.
\end{proof}

\begin{lemma}\label{lem:signs_of_charp}
Let $M$ be a $n \times n$ real symmetric matrix. Let $\chi_M(X) = \det(M-X.\textrm{Id}) = \sum_{i=0}^n p_i X^n$ be its characteristic polynomial. Then $M$ is positive semidefinite if and only if for all $0 \leq i < n$, $(-1)^i p_i \geq 0$. Furthermore, the sequence $(-1)^i p_i$ consists in $\dim \ker M $ zeroes followed by $n-k$ positive numbers. 
\end{lemma}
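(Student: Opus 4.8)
The plan is to reduce both claims to statements about the eigenvalues of $M$ and then feed those into Lemmas~\ref{lem:nonnegative_roots} and~\ref{lem:sign_symmetric_polys}. First I would invoke the spectral theorem: since $M$ is real symmetric it is orthogonally diagonalizable, so it has $n$ real eigenvalues $\lambda_1,\dots,\lambda_n$ counted with multiplicity; moreover $M\succeq 0$ if and only if $\lambda_i\geq 0$ for all $i$, and $\dim\ker M$ equals the number of indices $i$ with $\lambda_i=0$. Next I would expand the characteristic polynomial over these eigenvalues: $\chi_M(X)=\det(M-X\,\textrm{Id})=\prod_{i=1}^n(\lambda_i-X)$, and collecting the coefficient of $X^i$ yields $p_i=(-1)^i\,\sigma_{n-i}(\lambda_1,\dots,\lambda_n)$, hence $(-1)^i p_i=\sigma_{n-i}(\lambda_1,\dots,\lambda_n)$ for $0\leq i\leq n$. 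In particular, as $i$ ranges over $0,1,\dots,n-1$, the numbers $(-1)^i p_i$ are exactly $\sigma_n(\lambda),\sigma_{n-1}(\lambda),\dots,\sigma_1(\lambda)$, i.e.\ the elementary symmetric functions of the eigenvalues listed by decreasing index. (The value $i=n$ is omitted from the statement because $(-1)^n p_n=\sigma_0(\lambda)=1$ regardless of $M$, so it carries no information.)

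For the equivalence I would then chain: $M\succeq 0$ $\iff$ every $\lambda_i\geq 0$ $\iff$ (Lemma~\ref{lem:nonnegative_roots} applied with $x_j=\lambda_j$) $\sigma_j(\lambda_1,\dots,\lambda_n)\geq 0$ for every $1\leq j\leq n$ $\iff$ $(-1)^i p_i\geq 0$ for every $0\leq i<n$. For the refinement, assume $M\succeq 0$ and put $k=\dim\ker M$, the number of zero eigenvalues. Lemma~\ref{lem:sign_symmetric_polys} applied to the nonnegative reals $\lambda_1,\dots,\lambda_n$ says that $\sigma_j(\lambda)>0$ exactly for $1\leq j\leq n-k$ and $\sigma_j(\lambda)=0$ for $n-k<j\leq n$. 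Reading this off the list $\bigl((-1)^i p_i\bigr)_{0\leq i<n}=\bigl(\sigma_n(\lambda),\dots,\sigma_1(\lambda)\bigr)$: the indices $i=0,\dots,k-1$ pick out $\sigma_n(\lambda),\dots,\sigma_{n-k+1}(\lambda)$, all zero, and the indices $i=k,\dots,n-1$ pick out $\sigma_{n-k}(\lambda),\dots,\sigma_1(\lambda)$, all positive; so the sequence is $k=\dim\ker M$ zeroes followed by $n-k$ positive numbers, as claimed. Degenerate cases ($k=0$, giving $M\succ0$ and an all-positive sequence; $k=n$, giving $M=0$ and an all-zero sequence) are covered automatically, and the $k=n$ branch of Lemma~\ref{lem:nonnegative_roots}'s own proof handles the all-vanishing situation.

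The only place that demands care is the index bookkeeping: the sign $(-1)^i$ produced by the $\det(M-X\,\textrm{Id})$ convention, and above all the reversal that makes $p_i$ correspond to $\sigma_{n-i}$ rather than $\sigma_i$. That reversal is precisely what turns the ``positives then zeroes'' order in which the $\sigma_j(\lambda)$ naturally arrange themselves into the ``zeroes then positives'' order stated in the lemma. Beyond getting this ordering straight, there is no real obstacle; once the eigenvalue substitution is in place, the two cited lemmas do all of the combinatorial work.
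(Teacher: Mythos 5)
Your proof is correct and follows essentially the same route as the paper's: the identity $p_i=(-1)^i\sigma_{n-i}(\lambda_1,\dots,\lambda_n)$ for the eigenvalues of $M$, combined with Lemmas~\ref{lem:nonnegative_roots} and~\ref{lem:sign_symmetric_polys}. You simply spell out the spectral-theorem step and the index reversal that the paper leaves implicit; your careful bookkeeping of the ``zeroes then positives'' ordering in fact clarifies the indexing as stated in Lemma~\ref{lem:sign_symmetric_polys}.
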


\begin{proof}
For all $0 \leq i < n$, $p_i = (-1)^i \sigma_{n-i}(\lambda_1,\dots,\lambda_n)$ where the $(\lambda_i)_{1 \leq i \leq n}$ are the eigenvalues of~$M$ (multiple eigenvalues are counted as several $\lambda_i$). The result then ensues from lemmas~\ref{lem:nonnegative_roots} and \ref{lem:sign_symmetric_polys}.
\end{proof}
\end{document}